\documentclass[preprint]{elsarticle}

\usepackage{amsmath,amssymb,amsthm}
\usepackage{graphicx,enumerate}

\newtheorem{thm}{Theorem}
\newtheorem{lem}{Lemma}
\newtheorem*{mrthm}{Theorem (Morales-Ramis)}
\newtheorem*{kthm}{Theorem (Kovacic)}

\newcommand{\bmt}[1]{\mbox{\boldmath $#1$}}

\begin{document}

\title{Non-integrability of geodesic flow on certain algebraic surfaces}

\author{T. J. Waters}
\ead{thomas.waters@port.ac.uk}

\address{Department of Mathematics, University of Portsmouth,
Portsmouth PO13HF, United Kingdom}

\begin{abstract}
This paper addresses an open problem recently posed by V. Kozlov:
a rigorous proof of the non-integrability of the geodesic flow on
the cubic surface $x y z=1$. We prove this is the case using the
Morales-Ramis theorem and Kovacic algorithm. We also consider some
consequences and extensions of this result.
\end{abstract}

\maketitle

\section{Introduction}

In two recent papers \cite{kozlov1} and \cite{kozlov2} Kozlov
posed the following open problem: to rigorously prove the
non-integrability (in the sense of Louiville) of the geodesic flow
on the surface $x y z=1$. In what follows we will exploit the
Hamiltonian nature of the geodesic equations by examining the
variational equations about a planar geodesic. The crucial theorem
we shall make use of is due to Morales-Ruiz and Ramis, which we
quote from \cite{morruiz}:

\begin{mrthm} For a $2n$ dimensional Hamiltonian system assume there are $n$ first integrals which are meromorphic,
in involution and independent in the neighbourhood of some
non-constant solution. Then the identity component of the
 differential Galois group of the normal variational equation (NVE) is an abelian subgroup of the symplectic group.
\end{mrthm}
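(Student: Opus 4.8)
The plan is to follow the strategy of Morales-Ruiz and Ramis, combining the Hamiltonian structure of the variational equations with the Picard-Vessiot correspondence between first integrals and invariants of the differential Galois group. First I would fix a non-constant integral curve $\Gamma$ of the Hamiltonian vector field $X_{H}$ and write down the variational equation (VE) along $\Gamma$, namely $\dot{\xi}=DX_{H}(\Gamma(t))\,\xi$. Because $H$ itself is a first integral, the flow direction $X_{H}(\Gamma)$ together with $dH$ splits off a trivial two-dimensional block, and quotienting by it yields the normal variational equation (NVE) on a $2(n-1)$-dimensional symplectic space. The linearized flow preserves the symplectic form, so the differential Galois group $G$ of the NVE is an algebraic subgroup of the symplectic group $\mathrm{Sp}(2(n-1),\mathbb{C})$; it remains only to constrain its identity component $G^{0}$.

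Next I would transport the $n$ given first integrals down to the NVE. Restricting $f_{2},\dots,f_{n}$ (with $f_{1}=H$) along $\Gamma$ and expanding to lowest nontrivial order produces rational first integrals $\psi_{1},\dots,\psi_{n-1}$ of the NVE; the hypotheses of involution and independence are inherited, so the $\psi_{i}$ are functionally independent and satisfy $\{\psi_{i},\psi_{j}\}=0$ for the induced Poisson bracket. By Picard-Vessiot theory, a rational first integral of a linear system is exactly a rational function on the solution space fixed by $G$, so the $\psi_{i}$ descend to $n-1$ independent $G$-invariants in involution. This is the precise sense in which integrability forces invariants of the Galois group.

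The heart of the argument is then purely Lie-algebraic. Writing $\mathfrak{g}=\mathrm{Lie}(G^{0})\subseteq\mathfrak{sp}(2(n-1),\mathbb{C})$ and using the standard identification of $\mathfrak{sp}$ with quadratic Hamiltonians, each $A\in\mathfrak{g}$ corresponds to a quadratic function $h_{A}$ with $\{h_{A},h_{B}\}=h_{[A,B]}$ (up to the conventional sign). Invariance of the $\psi_{i}$ under the one-parameter subgroup generated by $A$ gives $\{\psi_{i},h_{A}\}=0$ for all $i$, so each $h_{A}$ is constant along the leaves of the Lagrangian foliation defined by the $n-1$ independent involutive functions $\psi_{i}$. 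Consequently the Hamiltonian vector field $X_{h_{A}}$ is tangent to these (Lagrangian) leaves, and since the symplectic form vanishes on the tangent space to a Lagrangian leaf we obtain $\{h_{A},h_{B}\}=\omega(X_{h_{A}},X_{h_{B}})=0$. Therefore $h_{[A,B]}=0$, and since $A\mapsto h_{A}$ is injective this forces $[A,B]=0$; hence $\mathfrak{g}$ is abelian and so is $G^{0}$, which is exactly the assertion.

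I expect the main obstacle to lie in the middle step rather than in the final linear algebra: one must verify that the restricted first integrals genuinely survive as $n-1$ functionally independent, involutive rational first integrals of the NVE, since independence can degenerate for special choices of $\Gamma$ and the reduction by the energy block has to be carried out compatibly with the Poisson structure. Equally delicate is the passage from meromorphic first integrals near $\Gamma$ to rational invariants of $G$ on the solution space via Picard-Vessiot, where the analytic and algebraic pictures must be reconciled; it is there that the hypotheses of meromorphy and independence, and the symplectic (not merely linear) nature of the NVE, do the real work.
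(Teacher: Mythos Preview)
The paper does not prove this statement at all: the Morales--Ramis theorem is quoted verbatim from \cite{morruiz} and used as a black box, so there is no ``paper's own proof'' to compare your proposal against. Everything the paper actually proves (Lemma~1 and Theorems~1--3) takes Morales--Ramis as given and focuses on computing the NVE for specific surfaces and running Kovacic's algorithm on it.

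That said, your sketch is a reasonable outline of the original Morales--Ramis argument. The three stages you identify --- reduction to the NVE inside $\mathrm{Sp}(2(n-1),\mathbb{C})$, transporting the first integrals to $n-1$ independent involutive rational invariants of the Galois group via a Ziglin-type lemma, and the symplectic Lie-algebra argument forcing $\mathfrak{g}$ to be abelian --- are the right ones. You are also correct that the genuinely delicate step is the middle one: showing that the lowest nontrivial homogeneous parts of $f_{2},\dots,f_{n}$ restricted along $\Gamma$ remain functionally independent and in involution after passing to the quotient by the energy block is precisely Ziglin's lemma (and its refinements), and it requires care with generic points and with matching the meromorphic and algebraic categories. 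Your final Lagrangian-leaf argument is clean but implicitly uses that the common level sets of the $\psi_{i}$ are generically smooth and of the right dimension, which again hinges on the independence established in the middle step. None of this, however, is relevant to the present paper, which simply invokes the theorem.
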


On two-dimensional manifolds the normal variational equation
(which we shall derive in the next section) is a second order
linear ordinary differential equation with meromorphic
coefficients. In particular, we will see that the NVE's of
interest are Fuchsian. To show that the geodesic flow is not
meromorphic-integrable it suffices to show that the NVE is not
solvable in the sense of differential Galois theory: the identity
component of the differential Galois group of the NVE is not
abelian. This means we cannot ``build'' the solutions from the
field of meromorphic functions by adjoining integrals,
exponentiation of integrals, or algebraic functions of elements of
the field of meromorphic functions. To test this we make use of
the Kovacic algorithm.

Before we state the Kovacic algorithm we note that this algorithm
is very robust and can treat any second order linear ODE with
rational coefficients, however in the present work we will only
need a very limited portion of the algorithm. Thus to save space
we will present a much abbreviated version and refer the reader to
the original article of Kovacic \cite{kovacicmain} and the reduced
version appropriate for Fuchsian ODE's in Churchill and Rod
\cite{church}, whose notation we will follow most closely below.

Consider a linear ode of the following form \begin{align}
\frac{d^2\xi}{dy^2}=\xi''=r(y)\xi, \quad r(y)\in\mathbb{C}(y).
\label{maineq}
\end{align} If the equation is Fuchsian, that is it admits only
regular singular points, then we can decompose $r(y)$ as
\begin{align*} r(y)=\sum_{j=1}^k\frac{\beta_j}{(y-a_j)^2}+\sum_{j=1}^k
\frac{\delta_j}{y-a_j}, \end{align*} where $k$ is the number of
finite regular singular points at locations $y=a_j$. When $\sum
\delta_j=0$ then $y=\infty$ is also a regular singular point, with
$\beta_{\infty}=\sum(\beta_j+\delta_j a_j)$. The indicial
exponents are \begin{align*}
\tau_j^\pm=\tfrac{1}{2}\left(1\pm\sqrt{1+4\beta_j}\right),\quad
\tau_\infty^\pm=\tfrac{1}{2}\left(1\pm\sqrt{1+4\beta_\infty}\right)
\end{align*} at $y=a_j$ and $y=\infty$ respectively.

Kovacic proved in \cite{kovacicmain} that there are only 4
possible cases for the differential Galois group of
\eqref{maineq}. We will see in Section 3 that we can rule out two
of these cases immediately (cases $I$ and $III$), and as such we
will present only necessary conditions for these cases.

\begin{kthm}
Let $\mathcal{G}$ be the differential Galois group associated with
\eqref{maineq}, and note $\mathcal{G}\subset SL(2,\mathbb{C})$.
Then only one of four cases can hold:

\begin{enumerate}[(I)]
\item $\mathcal{G}$ is triangulisable (or reducible). A necessary condition for this case to hold is that, defining the
`modified exponents' $\alpha^\pm$ as \begin{align*}
&\alpha_j^\pm=\tau_j^\pm\ \textrm{if}\ \beta_j\neq 0;\quad
\alpha_j^\pm=1\ \textrm{if}\ \beta_j=0\ \textrm{and}\ \delta_j
\neq 0;\quad \alpha_j^\pm=0\ \textrm{if}\ \beta_j=\delta_j=0, \\
&\alpha_\infty^\pm=\tau_\infty^\pm\ \textrm{if}\ \beta_\infty\neq
0; \quad \alpha_\infty^+=1,\alpha_\infty^-=0\ \textrm{if}\
\beta_\infty=0,
\end{align*} there is some combination \begin{align*}
d=\alpha_\infty^\pm-\sum_{j=1}^k\alpha_j^\pm \, \in
\mathbb{N}_0=0,1,2,3,\ldots
\end{align*}
\item $\mathcal{G}$ is conjugate to a subgroup of the `DP' group, in the terminology of Churchill and Rod. A
necessary and sufficient condition for this case to hold is that,
defining the following sets \begin{align}
&E_j=\{2+e\sqrt{1+4\beta_j},e=0,\pm 2\}\cap\mathbb{Z}\
\textrm{if}\ \beta_j\neq 0; \nonumber \\
&E_j=\{4\}\ \textrm{if}\ \beta_j=0,\delta_j\neq 0;\quad E_j=\{0\}\
\textrm{if}\ \beta_j=\delta_j=0; \nonumber \\
&E_\infty=\{2+e\sqrt{1+4\beta_\infty},e=0,\pm 2\}\cap\mathbb{Z}\
\textrm{if}\ \beta_\infty\neq 0;  \nonumber \\
&E_\infty=\{0,2,4\}\ \textrm{if}\ \beta_\infty=0, \label{esets}
\end{align}  there is some combination of $e_j\in E_j$ and
$e_\infty\in E_\infty$, not all even integers, so that
\begin{align*} d=\frac{1}{2}\Big(e_\infty-\sum_{j=1}^k e_j
\Big)\,\in\mathbb{N}_0 \end{align*} and there exists a monic
polynomial $P(y)$ of degree $d$ which solves the following ODE:
\begin{align} \hspace{-1.1cm} P'''+3\theta
P''+(3\theta^2+3\theta'-4r)P'+(\theta''+3\theta\theta'+\theta^3-4r\theta-2r')P=0,\quad
\theta=\frac{1}{2}\sum_{j=1}^k \frac{e_j}{y-a_j}. \label{plong}
\end{align}
\item $\mathcal{G}$ is finite. A necessary condition for this case to hold is that all indicial exponents
$\tau_j^{\pm}$ and $\tau_\infty^{\pm}$ are rational.
\item $\mathcal{G}=SL(2,\mathbb{C})$, whose identity component is not abelian and therefore \eqref{maineq} is
not solvable.
\end{enumerate}
\end{kthm}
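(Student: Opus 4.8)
Since this is Kovacic's classification theorem, the plan is to reduce everything to the algebraic structure of $\mathcal{G}$ as a subgroup of $SL(2,\mathbb{C})$. By Picard--Vessiot theory the Galois group of an equation in the reduced form \eqref{maineq} (no first-derivative term, hence constant Wronskian) is an algebraic subgroup of $SL(2,\mathbb{C})$, and the first step is to invoke the classification of such subgroups up to conjugacy: each is either (I) reducible, fixing a line and hence triangularisable; (II) irreducible but imprimitive, conjugate into the normaliser of the diagonal torus --- the monomial ``DP'' group; (III) primitive and finite (the binary tetrahedral, octahedral or icosahedral groups); or (IV) all of $SL(2,\mathbb{C})$. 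Case (IV) is the only one whose identity component is non-abelian, so the final assertion follows immediately once the other three are shown to exhaust the solvable possibilities.

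The second step is to translate each group-theoretic case into the shape of a solution via the logarithmic derivative $\omega=\eta'/\eta$, which satisfies the Riccati equation $\omega'+\omega^2=r$. A one-dimensional invariant subspace (case I) forces $\omega\in\mathbb{C}(y)$; imprimitivity (case II) forces $\omega$ to be algebraic of degree $2$; and finiteness (case III) forces $\omega$, and indeed every solution, to be algebraic, which in the Fuchsian setting can only occur when the local monodromy is finite and therefore all indicial exponents are rational --- exactly the stated necessary condition for (III). The necessary conditions on $d$ then come from local analysis: near a regular singular point a solution behaves like $(y-a_j)^{\tau_j^\pm}$, so $\omega$ has a simple pole whose residue is an indicial exponent, encoded by the modified exponents $\alpha_j^\pm$ in the degenerate cases $\beta_j=0$. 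Comparing the overall degree of the rational (or algebraic) part of $\omega$ against the sum of these residues, a Fuchs-type balance, yields $d=\alpha_\infty^\pm-\sum_j\alpha_j^\pm\in\mathbb{N}_0$ in case (I) and its analogue in case (II).

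For the explicit polynomial criterion in case (II) I would pass to the second symmetric power of \eqref{maineq}, the third-order equation $S'''-4rS'-2r'S=0$ satisfied by products of solutions. Imprimitivity means the anti-diagonal elements of $\mathcal{G}$ swap the two eigenlines, so the product $y_1y_2$ spans a one-dimensional invariant subspace of the symmetric power and has the form $S=P\exp\!\big(\int\theta\big)$ with $\theta=\tfrac12\sum_j e_j/(y-a_j)$ carrying the half-sums of local exponents $e_j\in E_j$ and $P$ rational; demanding that $P$ be a monic polynomial of degree $d$ and substituting $S=P\exp\!\big(\int\theta\big)$ into the symmetric-power equation reproduces exactly \eqref{plong}. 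Hence the existence of such a $P$ is equivalent to $\mathcal{G}$ lying in the DP group.

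The main obstacle --- and the genuinely nontrivial part that is Kovacic's rather than routine bookkeeping --- is proving that these local conditions are not merely necessary but, together with the Riccati/symmetric-power solvability, actually characterise each case, and that the associated search is finite (bounding $d$ and the exponent sets $E_j$ so that the algorithm terminates). For the present application this full strength is not needed: I require only the necessary conditions for (I) and (III) and the necessary-and-sufficient condition for (II), so that if all three fail I may conclude by elimination that $\mathcal{G}=SL(2,\mathbb{C})$, whose identity component is non-abelian.
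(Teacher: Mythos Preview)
The paper does not prove this statement at all: Kovacic's theorem is quoted as a known result, with the reader referred to Kovacic's original article and to Churchill and Rod for the Fuchsian version. There is therefore no ``paper's own proof'' to compare against.

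That said, your sketch is a faithful outline of the standard argument. The four-case dichotomy via the classification of algebraic subgroups of $SL(2,\mathbb{C})$, the translation into the Riccati equation $\omega'+\omega^2=r$ with $\omega$ rational, quadratic-algebraic, or algebraic according to case, the rationality of indicial exponents forced by finite local monodromy in case~(III), the residue/degree balance giving the $d$-condition in case~(I), and the passage to the second symmetric power $S'''-4rS'-2r'S=0$ with $S=P\exp\!\big(\int\theta\big)$ to obtain \eqref{plong} in case~(II) --- all of this is correct and is essentially how Kovacic and Churchill--Rod proceed. Your closing caveat is also apt: the delicate part is showing the search terminates and that the conditions in case~(II) are sufficient as well as necessary, which you correctly attribute to Kovacic rather than attempt here. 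For the purposes of this paper only the necessary conditions in (I) and (III) and the full criterion in (II) are used, exactly as you note.
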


In the next section we will derive the NVE about a planar geodesic
on the Monge patch with a plane of symmetry; in Section 3 we will
prove, using the theorems presented in this Section, that the NVE
is not solvable and therefore the geodesic flow is not Louiville
integrable on the surface $xyz=1$. In Section 4 we will consider
some extensions and consequences of this result, and in Section 5
we finish with some conclusions.

We note that the approach followed in this paper has been used to
prove the non-integrability of a number of problems in mechanics
and celestial mechanics (see, for example, \cite{mrthm1},
\cite{mrthm2}, \cite{bouch}, \cite{tsy}, \cite{humanez},
\cite{simon}, \cite{sam}, \cite{bardin}, to name but a few), but
with the exception of another work by the author \cite{TW} this
approach is novel in examining geodesic flow.


\section{Derivation of the NVE}

The key feature of the surface $xyz=c$, where w.l.o.g. we can set
$c=1$, which facilitates this analysis is that by a simple
rearrangement and rotation of $\pi/4$ about the $z$-axis we can
write $z=1/(x^2-y^2)$, or more generally \begin{align}
z=f(x,y),\quad f_{,x}(0,y)=0.\label{monge} \end{align} This means
the surface is a Monge patch (or graph) with a plane of symmetry
(or invariant plane), the $y$-$z$ plane. The surface is actually
made up of 4 identical components, and to demonstrate the
non-integrability of the geodesics of the surface we need only
demonstrate this property on one component. We will restrict our
attention to the quadrant
\begin{align*} \{x,y,z\in\mathbb{R}^3:|x|>|y|,x>0\} \end{align*} which
immediately rules out any possible divergences in $f$.

To keep the approach of this section general and to facilitate the
analysis of Section 4 we will derive the NVE on the Monge patch
with a plane of symmetry as in \eqref{monge}.

\begin{lem}
The normal variational equation about the planar geodesic on the
Monge patch \eqref{monge} is \begin{align*}
\ddot{\xi}+\mathcal{K}|_0\xi=0,
\end{align*} where $\mathcal{K}|_0$ is the Gauss curvature evaluated along
the planar geodesic.
\end{lem}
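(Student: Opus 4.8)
The plan is to realize the geodesic flow as a Hamiltonian system and read the normal variational equation directly off the linearized Hamilton equations. First I would write the induced metric on the Monge patch \eqref{monge} as $g_{ij}=\delta_{ij}+f_{,i}f_{,j}$, with determinant $W=1+f_{,x}^2+f_{,y}^2$, and take the geodesic Hamiltonian
\begin{align*}
H=\tfrac{1}{2}g^{ij}p_ip_j=\frac{1}{2W}\Big[(1+f_{,y}^2)p_x^2-2f_{,x}f_{,y}\,p_xp_y+(1+f_{,x}^2)p_y^2\Big].
\end{align*}
The symmetry hypothesis $f_{,x}(0,y)=0$ forces $f_{,xy}(0,y)=0$ as well, so along the plane $x=0$ the metric is diagonal, $g^{xx}|_0=1$, and $H$ is even in $x$. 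I would then verify that $\{x=0,\,p_x=0\}$ is invariant under the flow, i.e.\ that $\dot x=H_{p_x}=0$ and $\dot p_x=-H_x=0$ there, so the planar curve is genuinely a geodesic; this is the non-constant solution about which Morales-Ramis is applied.

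Next I would linearize. Writing the perturbation as $(\delta x,\delta y,\delta p_x,\delta p_y)$, the variational equation is governed by the Hessian of $H$ along the orbit. The key structural point is that every \emph{mixed} second derivative coupling the normal pair $(\delta x,\delta p_x)$ to the tangential pair $(\delta y,\delta p_y)$ --- namely $H_{xy},H_{xp_y},H_{p_xy},H_{p_xp_y}$ --- vanishes on $x=0$, precisely because $f_{,x}(0,y)\equiv0$ (equivalently, because the reflection $x\mapsto-x$ is a symmetry fixing the orbit). Hence the variational system decouples, and the block in $(\delta x,\delta p_x)$ is by definition the normal variational equation. Setting $\xi=\delta x$ and using $H_{p_xp_x}|_0=g^{xx}|_0=1$ together with the symmetry $H_{p_xx}=H_{xp_x}$ (so that the normal block is trace-free), I would eliminate $\delta p_x$ to obtain a single scalar second-order equation $\ddot\xi+\kappa(t)\,\xi=0$, where $\kappa=H_{xx}-(H_{p_xx})^2-\tfrac{d}{dt}H_{p_xx}$ evaluated along the geodesic.

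The substantive step, and the main obstacle, is to show that $\kappa$ is exactly the Gauss curvature $\mathcal{K}|_0$. Fixing the energy level $2H=1$ makes the flow parameter arc length and imposes $p_y^2=1+f_{,y}^2$ on the orbit; with this normalization I would compute $H_{xx}|_0$, $H_{p_xx}|_0$ and $\tfrac{d}{dt}H_{p_xx}|_0$ in terms of $f_{,xx},f_{,yy},f_{,xxy}$ and $f_{,y}$, using the planar equations of motion $\dot y=p_y/W$ and $\dot p_y=f_{,y}f_{,yy}/W$. The terms involving the third derivative $f_{,xxy}$ --- which appear separately in $H_{xx}$ and in $\tfrac{d}{dt}H_{p_xx}$ --- cancel, leaving $\kappa=f_{,xx}f_{,yy}/W^2$, which is precisely the Gauss curvature of \eqref{monge} restricted to $x=0$ (where $f_{,xy}=0$). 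This is the delicate bookkeeping part; the cleaner conceptual check is that the variational equation of a geodesic flow is the classical Jacobi equation, whose coefficient on a surface is the Gauss curvature, so the calculation is forced to collapse to $\mathcal{K}|_0$. I would present the direct computation and record this agreement as confirmation.
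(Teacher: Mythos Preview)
Your argument is correct, but it is a genuinely different route from the paper's. The paper works directly with the second-order geodesic equations via Christoffel symbols: for a Monge patch $\Gamma^{x}_{bc}=f_{,x}f_{,bc}/W$, so all $\Gamma^{x}_{bc}$ vanish on $x=0$, and linearizing the $x$-equation gives in one stroke $\ddot\xi+\Gamma^{x}_{yy,x}\big|_0\,\dot{\tilde y}^{2}\,\xi=0$, whose coefficient is immediately read off as $f_{,xx}f_{,yy}/(1+f_{,y}^{2})^{2}=\mathcal K|_0$. Your Hamiltonian derivation is equally valid and sits more naturally in the phase-space language of Morales--Ramis, but the price is the bookkeeping you flag: you have to compute $H_{xx}$, $H_{p_xx}$ and $\tfrac{d}{dt}H_{p_xx}$ separately and then watch the $f_{,xxy}$ contributions cancel, whereas the Lagrangian route never produces them. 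One small imprecision worth fixing: $f_{,x}(0,y)\equiv0$ does not by itself make $H$ even in $x$, nor is it equivalent to a reflection symmetry; what it \emph{does} give you is $f_{,xy}(0,y)=f_{,xyy}(0,y)=0$, and that is exactly what is needed to kill the mixed second derivatives $H_{xy},H_{xp_y},H_{p_xy},H_{p_xp_y}$ on the orbit and decouple the normal block.
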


\begin{proof}
Using the standard parameterisation $(x,y,f(x,y))$ and resulting
line element we may calculate the Christoffel symbols and geodesic
equations: \begin{align*}
\ddot{x}^a+\sum_{b,c}\Gamma^a_{bc}\,\dot{x}^b\dot{x}^c=0, \quad
x^a=(x,y)
\end{align*} where a dot denotes differentiation w.r.t.\ arc-length $s$.
The $x=0$ plane is invariant since $\Gamma^x_{yy}=0$ when
$f_{,x}(0,y)=0$, and thus there is a planar geodesic
$(0,\tilde{y}(s),f(0,\tilde{y}(s)))$ where $\tilde{y}$ solves
$(1+f_{,y}(0,\tilde{y})^{2})\dot{\tilde{y}}^2=1$. Linearizing the
geodesic equations about this planar geodesic the normal
variational equation will simply be the variation in the
$x$-direction, namely
\begin{align*} \ddot{\xi}+\Big(\Gamma^x_{yy,x}\dot{\tilde{y}}^2\Big)\Big|_0 \xi=
\ddot{\xi}+\left(\frac{f_{,xx}
f_{,yy}}{(1+f_{,y}^2)^2}\right)\Big|_0
\xi=\ddot{\xi}+\mathcal{K}|_0\xi=0.
\end{align*}
\end{proof}

The problem with this equation is that the coefficient is a
function of $\tilde{y}(s)$, which is defined implicitly as a
solution of $(1+f_{,y}^{\ 2})\dot{\tilde{y}}^2=1$. Clearly
$\tilde{y}$ will also parameterise the planar geodesic and thus we
make the change of independent variable to $\tilde{y}$,
calculating derivatives such as (dropping the tildes)
\begin{align*}
\frac{d}{ds}=\dot{y}\frac{d}{dy}=\frac{1}{\sqrt{1+f_{,y}^{\
2}}}\frac{d}{dy}
\end{align*} and so on, to arrive at the NVE (dropping the 0 subscript) \begin{align}
\xi''-\left(\frac{f_{,y}f_{,yy}}{1+f_{,y}^{\
2}}\right)\xi'+\left(\frac{f_{,yy}f_{,xx}}{1+f_{,y}^{\
2}}\right)\xi=0. \label{nve}
\end{align}

If, for a given surface $z=f(x,y)$ with $f_{,x}(0,y)=0$, this NVE
is not solvable in the sense of differential Galois theory as
described in Section 1, then the geodesic flow on that surface is
not integrable. This is precisely what we will show in the next
Section for the surface $xyz=1$. Before we do however, we can make
a comment about \eqref{nve}:

Notice that the equation is of the form
$\xi''-f_{,y}Q(y)\xi'+f_{,xx}Q(y)\xi=0$. If $f_{,xx}/f_{,y}=1/y$,
then \eqref{nve} would have the simple solution $\xi_1=c_1 y$ from
which we could construct a second solution via integrals and
exponentiation of integrals of meromorphic functions. This leads
us to consider solutions of the PDE
\begin{align} y f_{,xx}-f_{,y}=0 \label{pde}
\end{align} (where we evaluate the derivatives of $f$ at
$x=0$) as candidates for surfaces with integrable geodesic flow.
Examples include well-known integrable surfaces such as
$f(x,y)=f(x^2+y^2)$ and more interesting surfaces such as
$f=\cos(\omega x) e^{-\omega^2 y^2/2}$. But we should not divert
too much attention to \eqref{pde}: a surface which solves this
equation need not have integrable geodesic flow, it merely passes
this integrability test.


\section{Non-integrability of the surface $xyz=1$}

In the case of $z=f(x,y)=(x^2-y^2)^{-1}$, the NVE takes the form
\begin{align} \xi''-\frac{18(2+3y^6)}{y^2(y^6+4)^2}\xi=0,
\label{nvexyz}
\end{align}where we have removed the $\xi'$ term from \eqref{nve} via
the standard transformation \cite{kovacicmain}, and we extend the
independent variable to the complex domain. There are 8 regular
singular points, $a_j=\{0,\rho_1,\ldots,\rho_6,\}$ and $\infty$
where $\rho_i$ denotes the 6 roots of $y^6+4=0$ symmetrically
distributed about the circle of radius $^6\sqrt{4}$ centred on the
origin. We find the $\beta$ coefficients are \begin{align*}
\beta_j=\Big\{-\frac{9}{4},\frac{5}{16},\ldots,\frac{5}{16}
\Big\},\quad\beta_\infty=0,
\end{align*} and only $\delta_\infty=0$. We can see immediately
that $\tau_0^\pm=\tfrac{1}{2}(1\pm i\sqrt{8})$, and therefore case
$III$ of the Kovacic algorithm can be ruled out (the finite case).
What's more, none of the other $\tau_j^\pm$ are complex so case
$I$ of the algorithm can also be ruled out (the triangulisable
case).

Case $II$ is more problematic. The sets described in \eqref{esets}
are \begin{align*} E_0=\{2\},\quad E_{1\ldots 6}=\{2,5,-1\},\quad
E_\infty=\{0,2,4\}.
\end{align*} There are 21 combination of the elements of these sets leading to each of $d=0$ and
$d=1$, and 1 leading to each of $d=2,3,4$ (for example,
$d=\tfrac{1}{2}\big(4-(2-1-1-1-1-1-1)\big)=4$). For each of these
combinations we must attempt to construct a monic polynomial of
order $d$ that satisfies \eqref{plong}. This can be done using a
computer algebra system such as Mathematica; the calculations are
tedious rather than difficult. By checking each combination we can
see that there is no polynomial $P$ satisfying \eqref{plong}. We
can now state the main result of this paper:

\begin{thm} The geodesic flow on the surface $x y z=1$ is not
integrable in the sense of Louiville with meromorphic first
integrals.
\end{thm}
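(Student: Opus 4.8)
The plan is to chain together everything established in the preceding sections so that the Theorem follows by contraposition. The logical skeleton is: by Lemma 2, the normal variational equation about the planar geodesic on a Monge patch $z=f(x,y)$ with $f_{,x}(0,y)=0$ reduces to \eqref{nve}, and for the specific surface $f(x,y)=(x^2-y^2)^{-1}$ (which is $xyz=1$ after the rotation described in Section 2) this NVE takes the normalized Schr\"odinger form \eqref{nvexyz}. The Morales-Ramis theorem tells us that if the geodesic flow were meromorphically Liouville integrable, then the identity component of the differential Galois group of \eqref{nvexyz} would have to be abelian. So the entire burden is to show that this differential Galois group is in fact all of $SL(2,\mathbb{C})$, whose identity component is non-abelian.

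\medskip

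First I would verify that \eqref{nvexyz} is Fuchsian and catalog its singular structure: the eight finite regular singular points $y=0$ and the six roots $\rho_i$ of $y^6+4=0$, together with $y=\infty$, along with the associated data $\beta_0=-9/4$, $\beta_{\rho_i}=5/16$, and $\beta_\infty=0$, as recorded in the excerpt. With this data in hand I would run the Kovacic algorithm case by case. The easy eliminations come first: since $\tau_0^\pm=\tfrac{1}{2}(1\pm i\sqrt{8})$ is non-rational, the finite case (III) is immediately excluded; and since these exponents are genuinely complex while the other $\tau_j^\pm$ are not, the reducible case (I) is also excluded. This leaves only cases (II) and (IV), and case (IV) is precisely the non-solvable conclusion we are after.

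\medskip

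The main obstacle, and the heart of the proof, is therefore ruling out case (II): the possibility that $\mathcal{G}$ is conjugate to a subgroup of the dihedral-type ``DP'' group. Here I would form the sets $E_j$ from \eqref{esets}, namely $E_0=\{2\}$, $E_{\rho_i}=\{2,5,-1\}$, and $E_\infty=\{0,2,4\}$, and enumerate all admissible tuples $(e_\infty,e_1,\ldots,e_6)$ for which $d=\tfrac{1}{2}\bigl(e_\infty-\sum_j e_j\bigr)\in\mathbb{N}_0$ and the $e_j$ are not all even. For each surviving tuple one obtains a candidate degree $d\in\{0,1,2,3,4\}$ and a corresponding rational function $\theta=\tfrac{1}{2}\sum_j e_j/(y-a_j)$, and case (II) would hold if and only if the third-order ODE \eqref{plong} admits a monic polynomial solution $P(y)$ of degree exactly $d$. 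The plan is to substitute a monic ansatz with undetermined coefficients into \eqref{plong} and show that the resulting linear system for those coefficients is inconsistent for every admissible tuple. This is the step where the symmetry of the six roots $\rho_i$ and the constraint $d\le 4$ keep the computation finite but laborious; it is most cleanly discharged by a computer algebra system, as the excerpt indicates.

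\medskip

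Once case (II) is eliminated, only case (IV) remains, so $\mathcal{G}=SL(2,\mathbb{C})$ and \eqref{nvexyz} is not solvable in the sense of differential Galois theory. By the Morales-Ramis theorem this contradicts the existence of the required meromorphic first integrals in involution, and the non-integrability of the geodesic flow on $xyz=1$ follows.
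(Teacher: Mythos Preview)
Your proposal is correct and follows exactly the same route as the paper: reduce to the Fuchsian NVE \eqref{nvexyz}, use the complex indicial exponents at $y=0$ to rule out Kovacic cases $I$ and $III$, exhaustively eliminate case $II$ via the $E_j$ sets and the polynomial test \eqref{plong}, and then invoke Morales--Ramis on the resulting $\mathcal{G}=SL(2,\mathbb{C})$. The only cosmetic slip is that your tuple $(e_\infty,e_1,\ldots,e_6)$ omits $e_0$ from $E_0=\{2\}$, but since you list $E_0$ explicitly this is clearly just notation.
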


\begin{proof}
The differential Galois group of the normal variational equation
\eqref{nvexyz} does not fall into case $I$, $II$ or $III$ of
Kovacic's algorithm, as we have shown above. Therefore we must
have $\mathcal{G}=SL(2,\mathbb{C})$, the identity component of
which (also $SL(2,\mathbb{C})$) is not abelian. By the
Morales-Ramis theorem of Section 1 this means the geodesic
equations are not Louiville integrable with meromorphic first
integrals.
\end{proof}


\section{Extensions and limitations}

It seems natural to ask can we use the same techniques to examine
other surfaces similarly defined. We will consider two
generalisations, $x^ny^nz^n=1$ and $x^ny^nz=1$.

\subsection{Surfaces of the form $x^ny^nz^n=1$}

While it might seem ``obvious'' that $xyz=1$ and $(xyz)^n=1$ are
``the same'', care needs to be taken. If $n$ is an even integer
then the surface will have twice as many components as when $n$ is
odd; for example the point $(1,1,-1)$ is on $x^2y^2z^2=1$ but not
on $xyz=1$. To show they are isometric would require the
calculation of the first fundamental form, which is not
well-defined for algebraic surfaces, i.e.\ surfaces defined
implicitly by $F(x,y,z)=c$. We can calculate the Gauss curvature
using the following expression \cite{spivak} (here $\nabla F$ and
$H(F)$ are the gradient and Hessian of $F$ respectively, and the
norms are w.r.t.\ the ambient Euclidean space)
\begin{align*} K=-\frac{\left|\begin{array}
  {cc} H(F) & \nabla F \\ \nabla F^T & 0
\end{array} \right|}{|\nabla F|^4}
\end{align*} which we find to be independent of the value of $n$,
but having the same Gauss curvature at identified points is a
necessary but not sufficient condition for isometry. Instead, we
can generate the geodesic equations themselves on the algebraic
surfaces in question, and prove the following theorem.

\begin{thm}
  The geodesic flow on the algebraic surface $x^n y^n z^n=1$ with
  $n\in\mathbb{R}$ is not Louiville integrable with meromorphic
  first integrals.
\end{thm}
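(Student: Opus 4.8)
The plan is to reduce Theorem 2 to the machinery already developed for $xyz=1$, by showing that the geodesic flow on $x^ny^nz^n=1$ produces a normal variational equation of exactly the same type as \eqref{nvexyz}, whose differential Galois group is again all of $SL(2,\mathbb{C})$. First I would observe that the surface $x^ny^nz^n=1$ can be written (on the appropriate component, after the same $\pi/4$ rotation that sent $xyz=1$ to $z=(x^2-y^2)^{-1}$) as a Monge patch $z=f(x,y)$ with $f_{,x}(0,y)=0$, so that Lemma~1 and the reduced NVE \eqref{nve} apply verbatim. The key point is that raising to the $n$th power amounts, on the chosen quadrant where all coordinates are positive, to the substitution $z^n=(x^2-y^2)^{-n}$, i.e.\ $z=(x^2-y^2)^{-1}$ again up to a real power; I would compute $f(x,y)$ explicitly and verify that the symmetry condition $f_{,x}(0,y)=0$ persists for all $n\in\mathbb{R}$.

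Next I would substitute this $f$ into \eqref{nve} and compute the two coefficients $f_{,y}f_{,yy}/(1+f_{,y}^2)$ and $f_{,yy}f_{,xx}/(1+f_{,y}^2)$ along $x=0$, then remove the first-derivative term by the same standard transformation used to pass from \eqref{nve} to \eqref{nvexyz}. I expect the resulting Schr\"odinger-form equation $\xi''=r(y)\xi$ to have rational $r(y)$ whose structure is independent of $n$ up to the multiplicities and locations of its singular points: a regular singular point at $y=0$, a symmetric ring of finite singularities at the roots of some polynomial, and a regular singular point at $\infty$. The crucial claim is that the $\beta$-exponents at these singularities, and the exponent $\beta_\infty$, either coincide with those of \eqref{nvexyz} or lie in the same arithmetic classes, so that the Kovacic analysis carries over.

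With $r(y)$ in hand, the verification is structurally identical to Section~3. I would check that the indicial exponent at $y=0$ is non-real (ruling out case $III$, the finite case, and case $I$, the reducible case, exactly as the complex $\tau_0^\pm=\tfrac12(1\pm i\sqrt8)$ did for $xyz=1$), and then address the delicate case $II$ by forming the sets $E_j$ and $E_\infty$ of \eqref{esets}, enumerating the finitely many combinations giving $d\in\mathbb{N}_0$, and showing that no monic polynomial of the required degree solves \eqref{plong}. Since case $II$ is the only surviving possibility after the others are excluded, ruling it out forces $\mathcal{G}=SL(2,\mathbb{C})$, whose identity component is non-abelian, and the Morales-Ramis theorem then delivers non-integrability.

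The main obstacle is the case $II$ analysis: because $n$ is a continuous real parameter, I cannot simply run the finite Mathematica check that sufficed for the single equation \eqref{nvexyz}. The $\beta$-exponents, and hence the sets $E_j$, may depend on $n$, so I would need to argue uniformly in $n$ rather than case-by-case. The cleanest route, which I would pursue, is to show that the exponents at the finite ring singularities and at infinity are the \emph{same} for every $n$ (so that the Gauss curvature computation, which the excerpt already notes is $n$-independent, forces $n$-independence of the NVE after the change of variable), thereby reducing Theorem~2 to the already-completed computation for $xyz=1$. If instead some exponents genuinely vary with $n$, the fallback is to isolate the finitely many exceptional values of $n$ at which a resonance $\sqrt{1+4\beta_j}\in\mathbb{Q}$ could create a solvable case, and dispatch those separately; I anticipate that generic $n$ behaves like $xyz=1$ and that the exceptional set is empty or handled by the same polynomial-nonexistence argument.
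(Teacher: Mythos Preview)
Your proposal is workable but takes a longer road than the paper. The paper never recomputes the NVE or re-runs Kovacic for general $n$; instead it uses the implicit (constrained) formulation of geodesics on an algebraic surface $F(\bmt{r})=c$, namely $\ddot{\bmt{r}}=\lambda\nabla F$ with $\lambda=-\big(H(F)\dot{\bmt{r}}\big)\cdot\dot{\bmt{r}}/|\nabla F|^2$. Writing $F=(xyz)^n=G^n$ and using $\dot G=\nabla G\cdot\dot{\bmt{r}}=0$ along the surface, one finds $\lambda_F\nabla F=\lambda_G\nabla G$, so the geodesic equations are literally independent of $n$. This reduces Theorem~2 to the already-established $n=1$ case in one line, and it covers the extra components present for even $n$ (such as points with $xyz=-1$) without needing a Monge patch on each piece.

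Your ``cleanest route''---arguing that the NVE is $n$-independent and hence identical to \eqref{nvexyz}---would also succeed, and in fact your observation that on the positive quadrant $z^n=(x^2-y^2)^{-n}$ forces $z=(x^2-y^2)^{-1}$ already shows the Monge patch is the \emph{same} function of $(x,y)$, not merely ``up to a real power''; once you see this, the NVE is exactly \eqref{nvexyz} and no further Kovacic work is needed. Your fallback plan, however, is not viable: if the exponents genuinely depended on the continuous parameter $n$, the case~$II$ polynomial check of \eqref{plong} could not be carried out uniformly (nor ``for finitely many exceptional $n$''), so that branch should be abandoned rather than kept as insurance. The paper's constrained-Lagrangian argument buys you exactly the uniformity you were hoping to establish, at the level of the full geodesic flow rather than only the NVE.
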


\begin{proof}
  The geodesic equations on the algebraic surface $F(\bmt{r})=c$
  where $\bmt{r}=(x,y,z)$ are given by \cite{kozlov1} \begin{align*}
    \ddot{\bmt{r}}=\lambda \nabla F,\quad \lambda=-\frac{\big(H(F)\dot{\bmt{r}}\big).\dot{\bmt{r}}}{|\nabla
    F|^2}.
  \end{align*} Taking $F=x^ny^nz^n$ we find the geodesic equations
  are independent of $n$, i.e.\ the geodesic equations are the
  same for all values of $n$ (we need to make use of the fact that $\dot{F}=\nabla F.\dot{\bmt{r}}=0$). We have shown the geodesic equations
  are not Louiville integrable when $n=1$ in the previous Section,
  and therefore they are not integrable for $n\in\mathbb{R}$.
\end{proof}

\subsection{Surfaces of the form $x^ny^nz=1$}

It might be hoped that we could generalize the surface considered
in Section 3 to Monge patches of the form \begin{align}
z=\frac{1}{(x^2-y^2)^n}, \quad n\in\mathbb{N}.\label{gensur}
\end{align} Unfortunately the techniques described in this paper
do not allow for a uniform treatment, for the following reason.

Using the methods of Section 2 the NVE of the planar geodesic of
\eqref{gensur} is \begin{align*}
\xi''+\frac{2n^2(2n+1)(4n^3-10n^2-y^{4n+2}(4n+5))}{y^2(4n^2+y^{4n+2})^2}\xi=0.
\end{align*} As before, there are regular singular points at $0,
\infty$ and the $4n+2$ roots of $4n^2+y^{4n+2}=0$ which are
distinct and symmetrically distributed along a circle centred on
the origin of the complex plane. The $\beta$ coefficients are
\begin{align*}
  \beta_j=\left\{ \frac{(1+2n)(2n-5)}{4},\frac{5}{16},\ldots,\frac{5}{16}
  \right\},\quad\beta_\infty=0,
\end{align*} where $\tfrac{5}{16}$ appears $4n+2$ times. We note
that \begin{align*}   \sqrt{1+4\beta_0}=2\sqrt{n^2-2n-1}\ \notin
\mathbb{Q}\ \forall\ n \in \mathbb{N}.
\end{align*}  To show this we note
that $\sqrt{1+4\beta_0}\in\mathbb{Q} \Rightarrow n^2-2n-1=m^2$ for
some $m\in\mathbb{N}$. Since $n>m \Rightarrow m=n-\eta$ for
$\eta\in\mathbb{N}$ which leads to a quadratic in $\eta$ whose
roots are $(2n+\tfrac{1}{2},-\tfrac{1}{2})\notin\mathbb{N}$.
Therefore we can rule out cases $I$ and $III$ of the Kovacic
algorithm as in Section 3.

However, in analysing case $II$, the $E_j$ sets as in
\eqref{esets} are \begin{align*} E_0=\{2\},\quad
E_{1\ldots(4n+2)}=\{2,5,-1\},\quad E_\infty=\{0,2,4\}
\end{align*} and as such there will be a combination leading to
$d=\tfrac{1}{2}\big(4-(2-(4n+2)) \big)=2n+2$ and all values below.
Thus we can at best look at individual values of $n$, for example:

\begin{thm}
  The geodesic flow on the surface $x^2y^2z=1$ in not Louiville
  integrable with meromorphic first integrals.
\end{thm}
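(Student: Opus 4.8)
The plan is to specialise the general NVE displayed above to $n=2$ and then run the Kovacic algorithm exactly as in Section~3. First I would substitute $n=2$ into that equation, obtaining a Fuchsian equation whose regular singular points are $y=0$, $y=\infty$, and the $4n+2=10$ distinct roots of $16+y^{10}=0$ symmetrically distributed on a circle about the origin. The coefficients read off from this subsection are $\beta_0=\tfrac{(1+2n)(2n-5)}{4}=-\tfrac54$, $\beta_j=\tfrac{5}{16}$ at each of the ten roots, and $\beta_\infty=0$ with $\delta_\infty=0$. Hence $\sqrt{1+4\beta_0}=2\sqrt{n^2-2n-1}=2i$, so $\tau_0^\pm=\tfrac12(1\pm 2i)$ is genuinely complex while every other indicial exponent is rational. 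As for $xyz=1$, the complex exponent at the origin rules out case~$III$ (which requires all exponents rational); and, since no other $\tau_j^\pm$ is complex, any candidate $d=\alpha_\infty^\pm-\sum_j\alpha_j^\pm$ inherits a nonzero imaginary part from $\alpha_0^\pm$, so $d\notin\mathbb{N}_0$ and case~$I$ is excluded too.

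The substance of the proof is case~$II$, where the sets are $E_0=\{2\}$, $E_j=\{2,5,-1\}$ at each of the ten roots, and $E_\infty=\{0,2,4\}$. I would first cut the admissible combinations down to a short list. Writing $k$, $\ell$, $m$ for the numbers of roots assigned $e_j=5,\,-1,\,2$ respectively, so $k+\ell+m=10$, a short computation gives $d=\tfrac12\big(e_\infty-22-3(k-\ell)\big)$; imposing $d\in\mathbb{N}_0$ forces $\ell-k\in\{6,8,10\}$, and combining this with $d\ge 0$ restricts the choices of $e_\infty$ and pins down which degrees $d\in\{0,1,\dots,6\}$ can occur together with the corresponding types $(k,\ell,m)$. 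For each admissible type, and each way of attaching the labels $5,-1,2$ to specific roots, I would form $\theta=\tfrac12\sum_j e_j/(y-a_j)$ and test \eqref{plong} for a monic solution $P$ of degree $d$, discarding at the outset any combination in which every $e_j$ and $e_\infty$ is even.

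The main obstacle is simply the combinatorial size of this search: with ten symmetrically placed roots the number of label assignments is far larger than in the $xyz=1$ case, so a blind sweep is unattractive. I would trim it using the symmetry $y\mapsto\omega y$ with $\omega=e^{2\pi i/10}$, which permutes the ten roots and, because it carries the NVE to itself, sends the case~$II$ data of one assignment to that of the rotated assignment; together with complex conjugation this collapses each orbit to a single representative, after which testing \eqref{plong} for each representative is the routine (if tedious) linear-algebra task that a computer algebra system handles directly. Once it is verified that no admissible combination produces a monic $P$ solving \eqref{plong}, case~$II$ is eliminated and $\mathcal{G}=SL(2,\mathbb{C})$, whose identity component is not abelian. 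By the Morales-Ramis theorem the geodesic flow on $x^2y^2z=1$ admits no complete involutive system of meromorphic first integrals, which is the assertion of the theorem.
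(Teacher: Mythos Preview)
Your proposal is correct and follows essentially the same route as the paper: specialise the NVE to $n=2$, rule out cases~$I$ and~$III$ via the complex indicial exponent at $y=0$, and then exhaustively test case~$II$ combinations for a polynomial solution of \eqref{plong} with computer algebra; your counts ($615$ for $d=0$, $55$ for $d=1,2,3$, $1$ for $d=4,5,6$) match the paper's exactly. The one addition you make---exploiting the $y\mapsto e^{2\pi i/10}y$ symmetry of the NVE (together with conjugation) to collapse label assignments into orbits---is a sound optimisation that the paper does not use, but it does not change the underlying argument.
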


\begin{proof}
There are 615 combinations of the indicial exponents leading to
$d=0$; 55 leading to $d=1,2,3$ and 1 leading to $d=4,5,6$. Each of
these need to be check as described in Section 3. Again, the
procedure is tedious rather than difficult. As there are no
combinations for which the necessary $P$ exists, we can rule out
case $II$ of Kovacic's algorithm. Thus the identity component of
the differential Galois group of the normal variational equation
is not abelian, and therefore the geodesic flow on the surface
$z=1/(x^2-y^2)^2$ is not integrable.
\end{proof}

As the number of cases which need to be checked increases rapidly
for increasing $n$, the methods described in this paper are not
appropriate for testing the integrability of surfaces of the form
$x^ny^nz=1$. Having said that, since the $n=1$ and $n=2$ cases are
not integrable, we would conjecture that all surfaces of this form
with $n\in\mathbb{N}$ are also non-integrable.


\section{Conclusions}

Using Morales-Ramis theory and Kovacic's algorithm we are able to
rigorously prove the (meromorphic, Louiville) non-integrability of
the geodesic flow on certain algebraic surfaces. This approach is
very geometrical in flavour, as opposed to the topological
approach followed by Kozlov \cite{kozlov2}; nonetheless it is
robust enough to deal with free parameters and perturbations as
another paper by the author has shown \cite{TW}. The analysis was
facilitated by two features of the surfaces considered: Monge
patches allow a simple intrinsic coordinate
system/parameterization to be defined, and a plane of symmetry
leads to a planar geodesic along which the variational equations
decouple easily. It would be of interest to consider other
surfaces where these properties do not hold.


\section{Bibliography}

\bibliographystyle{plain}
\bibliography{geobib}

\end{document}